\renewcommand{\vec}[1]{\mathbf{#1}}
\newcommand{\RR}{\mathbb{R}}
\newcommand{\ZZ}{\mathbb{Z}}
\newcommand{\NN}{\mathbb{N}}
\newcommand{\EE}{\mathbb{E}}
\newcommand{\dd}[1]{\mathop{d#1}}
\DeclareMathOperator{\vspan}{Span}
\DeclareMathOperator{\gind}{ind}
\DeclareMathOperator{\gdep}{dep}
\DeclareMathOperator{\loc}{loc}
\title{\textbf{Geometric Structure in Weighted Alpert Wavelets}}
\author{Fletcher Gates and Scott Rodney}
\institute{}
\date{March 26, 2025}
\begin{document}

\maketitle

\begin{abstract} 
In this paper we present a number of results concerning Alpert wavelet bases for $L^2(\mu)$, with $\mu$ a locally finite positive Borel measure on $\RR^n$. We show that the properties of such a basis depend on linear dependences in $L^2(\mu)$ among the functions from which the wavelets are constructed; this result completes an investigation begun by Rahm, Sawyer, and Wick \cite{rahm2019weighted}. We also show that a Gr\"{o}bner basis technique can be used to efficiently detect these dependences. Lastly we give a generalization of the Alpert basis construction, where the amount of orthogonality in the basis is allowed to vary over the dyadic grid.
\end{abstract}

\section{Introduction and Main Results}
\label{sec:introduction}

The classical Haar basis for $L^2(\RR)$ consists of wavelets supported on dyadic intervals which are piecewise constant and which are orthogonal to constants. In \cite{alpert1992}, Alpert constructs a similar basis where each wavelet is instead orthogonal to all polynomials with degree less than some fixed $k$. This is achieved by making two concessions: 
\begin{enumerate}
\item Where each Haar wavelet is piecewise constant, each Alpert wavelet is instead piecewise polynomial of degree less than $k$.
\item Where the Haar basis has one wavelet per dyadic interval, an Alpert basis has $k$ wavelets per dyadic interval.
\end{enumerate}
Moreover, Alpert showed that such a basis can be constructed so that some of the basis functions are orthogonal to higher-order polynomials. The algorithm to produce such a basis is given in \cite[Section 1.1]{alpert1992}.

Rahm, Sawyer, and Wick \cite{rahm2019weighted} showed that Alpert's construction also yields a basis for $L^2(\mu)$, where $\mu$ is any locally finite positive Borel measures on $\RR^n$. The resulting bases have been used to prove a number of results regarding Calder\'on-Zygmund operators---see for example \cite{alexis2023t1theorem} by Alexis, Sawyer, and Uriarte-Tuero, where Alpert bases are used to extend the David-Journ\'{e} $T1$ theorem \cite{david1984} from Lebesgue measure to pairs of doubling measures. Alpert wavelets were also recently used by Sawyer to prove a probabilistic analogue of the Fourier extension conjecture \cite{sawyer2024}.

Rahm, Sawyer, and Wick also observed that, in certain measures, the number of wavelets needed for each dyadic cube is sometimes less than would be needed in Lebesgue measure. We give a complete categorization of this behaviour in sections \ref{sec:dimension} and \ref{sec:polynomial-alpert-wavelets}. In section \ref{sec:variable-alpert-bases} we give a generalization of Alpert's construction, which allows for more control in the construction of the basis.

The remainder of this section will give ``high-level" but imprecise statements of our three main results. Section \ref{sec:preliminaries} defines the necessary concepts and notation to make each of these statements precise.

Let $\mu$ be a locally finite positive Borel measure on $\RR^n$, let $D$ be a dyadic grid on $\RR^n$, and let $U$ be a finite set of locally-integrable functions which contains the constant function $1$. The associated Alpert basis for $L^2(\mu)$ consists of functions which are each supported on some cube $Q \in D$, are each in $\vspan U$ when restricted to any child of $Q$, and are each orthogonal to all of $U$. In such a basis, the number of Alpert wavelets needed on a given cube $Q$ depends on both the set $U$ and the underlying measure $\mu$.

\newtheorem*{thm:dimension-inequality}{Theorem \ref{thm:dimension-inequality}}
\begin{thm:dimension-inequality}
When constructing an Alpert basis, the number of wavelets needed for a given dyadic cube $Q$ is the sum of the dimensions of $\vspan U$ when restricted to each child of $Q$, minus the dimension of $\vspan U$ when restricted to $Q$. Moreover, when applying extra orthogonality conditions to an Alpert basis, each additional condition is either achieved ``for free" without affecting the basis, or otherwise is achieved by reducing the number of wavelets satisfying that condition by 1.
\end{thm:dimension-inequality}

Our second result provides a further refinement of this result for Alpert bases constructed from polynomials of some bounded degree; this is the special case discussed by Rahm, Sawyer, and Wick in \cite{rahm2019weighted}. This result uses concepts from algebraic geometry with which the reader may be unfamilar; definitions are provided in section \ref{sec:preliminaries}.

\newtheorem*{thm:classification-theorem}{Theorem \ref{thm:classification-theorem}}
\begin{thm:classification-theorem}
Let $G$ be a Gr\"{o}bner basis for the ideal of all polynomials on $Q$ which vanish outside a set of $\mu$-measure zero. The set of all monomials on $Q$ which are not a multiple of any leading term in $G$ is a maximal linearly independent set.
\end{thm:classification-theorem}

By Theorem \ref{thm:dimension-inequality}, knowing the dimensions of these polynomial spaces is sufficient to determine the number of Alpert wavelets needed for a given cube. This result has additional utility in that a single Gr\"{o}bner basis calculation simultaneously provides the dimensions of these polynomial spaces for every choice of degree.

Our third result presents a generalization of the standard Alpert basis construction. For a dyadic cube $Q$, let $P(Q)$ denote the parent of $Q$.

\newtheorem*{thm:variable-alpert-bases}{Theorem \ref{thm:variable-alpert-bases}}
\begin{thm:variable-alpert-bases}
For every dyadic cube $Q$ let $U_Q$ be a finite set of locally-integrable functions, and suppose that these sets obey the nesting condition $U_{P(Q)} \subseteq U_Q$. Then an orthonormal basis for $L^2(\mu)$ can be constructed from the following three components:
\begin{enumerate}
\item For every $Q \in D$, a set of Alpert wavelets on $Q$ constructed from $U_Q$.
\item For every $Q \in D$, an orthonormal basis for the orthogonal complement of $\vspan{U_{P(Q)}}$ inside $\vspan{U_Q}$.  
\item For every dyadic top $T \in \tau(D)$, an orthonormal basis for $\vspan U_T$, where $U_T = \cap_{Q \subset T} U_Q$.
\end{enumerate}
This basis retains the same orthogonality properties as a standard Alpert basis.
\end{thm:variable-alpert-bases}

A basis of this type allows for interpolation between the amount of orthogonality achieved and the number of wavelets needed for each cube.

\section{Preliminaries}
\label{sec:preliminaries}

We define a \textit{cube} $Q \in \RR^n$ to be specifically oriented with sides parallel to the coordinate hyperplanes, i.e. a set of the form 
\[Q = \{x \in \RR^n : a_i \leq x_i < b_i, i= 1, \dots, n\} = \prod_{i=1}^n [a_i, b_i)\]
where each $a_i, b_i \in \RR$ is constant and where there is a constant $l(Q)>0$ such that
\[b_1 - a_1 = b_2 - a_2 = \dots = b_n - a_n = l(Q)\]
We call $l(Q)$ the side length of $Q$.

A \textit{dyadic grid} $D$ on $\RR^n$ is a set of cubes with the following properties:
\begin{enumerate}
\item Every cube $Q \in D$ has $l(Q) = 2^m$ for some $m \in \ZZ$.
\item Every cube $Q \in D$ with $l(Q) = 2^m$ is contained in some $R \in D$ with $l(R) = 2^{m+1}$.
\item For every $m \in \ZZ$, the subset of all cubes in $D$ with length $2^m$ partitions $\RR^n$. 
\end{enumerate}
Given a dyadic grid $D$ and $Q \in D$ with side length $2^m$, the set of children $C(Q)$ is the set of $2^n$ cubes in $D$ contained within $Q$ having side length $2^{m-1}$. Similarly the parent $P(Q)$ is the unique cube in $D$ which contains $Q$ and has side length $2^{m+1}$

Let $\mu$ be a locally finite positive Borel measure on $\RR^n$, $D$ be a dyadic grid on $\RR^n$, and $Q \in D$. Denote by $L_Q^2(\mu)$ the space of functions in $L^2(\mu)$ restricted to $Q$. Also let $L^2_{\loc}(\mu)$ be the space of locally square-integrable functions on $\RR^n$, so we have $L^2_Q(\mu) \subset L^2_{\loc}(\mu)$ for every $Q \in D$. We will be investigating the following two spaces:

\begin{definition}[Component Space]
Let $\mu$ be a locally finite positive Borel measure on $\RR^n$, $Q \in D$, and $U \subset L^2_{\loc}(\mu)$. Define the component space $P_{Q,U}(\mu) = \vspan\{\vec 1_Q \cdot p\}_{p \in U}$, the subspace of $L_Q^2(\mu)$ generated by the restrictions of $U$ to $Q$.
\end{definition}

\begin{definition}[Alpert Space]
Let $\mu$ be a locally finite positive Borel measure on $\RR^n$, $Q \in D$, and $U, V \subset L^2_{\loc}(\mu)$. Define the Alpert space $L^2_{Q, U, V}(\mu)$ to be the subspace of functions in $L_Q^2(\mu)$ whose restrictions to each child $Q' \in C(Q)$ are in $\vec 1_{Q'} U$ and which are orthogonal to each function in $V$. Namely:
\[L^2_{Q, U, V}(\mu) = \left\{f = \sum_{Q' \in C(Q)} \vec 1_{Q'} p: \int_Q f(x) \cdot q(x) \dd{\mu (x)} = 0 \text{ for all } q(x) \in V \right\}\]
where each function $p$ is in $U$.
\end{definition}

Alpert spaces can be thought of as the spaces from which individual Alpert wavelets are selected to form a basis. The following identity is helpful to visualize the structure of an Alpert space in terms of its component spaces:
\[L^2_{Q, U, \varnothing}(\mu) = \bigoplus_{Q' \in C(Q)} P_{Q',U}(\mu) = L^2_{Q, U, V}(\mu) \oplus P_{Q,U}(\mu). \]

Denote by $\RR[\vec{x}]$ the ring of real polynomials in $n$ variables, with $n$ being taken from context. Given $S \in \RR[\vec{x}]$, the \textit{zero locus} $Z(S) \subseteq \RR^n$ is the set of common roots to every $p \in S$. An \textit{algebraic set} $A$ in $\RR^n$ is any subset of $\RR^n$ which is the zero locus of some $S$, i.e. $A = Z(S)$ for some $S \subset \RR[\vec x]$.

We will use the notation $x^\alpha$ to denote an arbitrary monic monomial in $n$ variables, so $\alpha$ is a multi-index of length $n$. A \textit{monomial order} $M$ on $\RR[\vec{x}]$ is a well-ordering on the set of all monic monomials which respects multiplication, and $M$ is \textit{graded} if the ordering also respects total degree. For a polynomial $p \in \RR[\vec{x}]$, the leading term $LT(p)$ is the greatest monomial in $p$ with respect to $M$. Similarly for any set $S$ of polynomials, $LT(S)$ is the set of leading terms $\{LT(p)\}_{p \in S}$.

\begin{definition}
Given $k,n \in \NN$, define $F^n_k$ to be the set of all monomials in $n$ variables with degree less than $k$.
\end{definition}

By a standard combinatorial argument, we have $\#F^n_k = {n + k - 1 \choose n}$. This is the number of ways to partition $k-1$ into $n+1$ groups, and by excluding the final group we see that this is equal to the number of ways to partition $k-1$ into at most $n$ groups. 

\begin{definition}[Gr\"{o}bner Basis]
Let $M$ be a monomial order on $\RR^n$ and let $I$ be an ideal in $\RR[\vec{x}]$. A Gr\"{o}bner basis $G$ for $I$ is a generating set for $I$ such that for any polynomial $p \in I$, $LT(p)$ is divisible by $LT(q)$ for some $q \in G$. A Gr\"{o}bner basis is called reduced if no monomial in any $p \in G$  is in $LT(G \setminus \{p\})$ and every $p \in G$ is monic. 
\end{definition}

An equivalent definition is: a Gr\"{o}bner basis $G$ is a generating set for $I$ such that $\langle LT(I) \rangle = \langle LT(G) \rangle$. While Gr\"{o}bner bases depend on the choice of monomial order and in general are not unique, they are guaranteed to exist for polynomial rings over a field in finitely many variables. Moreover, for a given monomial order there is a unique reduced Gr\"{o}bner basis. Given an ideal $I \in \RR[\vec x]$ there exist efficient algorithms to compute a Gr\"{o}bner basis for $I$; see for example \cite[pp.~324-326]{dummitfoote}.

\begin{definition}[Hilbert Dimension]
Let $I$ be an ideal in $\RR[\vec x]$. The Hilbert dimension of $I$ is the maximal size of a subset $S$ of variables in $\vec x$ such that no leading monomial in $I$ can be expressed entirely using variables in $S$. 
\end{definition}

Observe that Hilbert dimension can be easily computed using a Gr\"{o}bner basis; given an ideal $I$ with Gr\"{o}bner basis $G$, the set of leading terms $LT(I)$ is equal to $LT(G)$. So the size of a maximal set of variables which produces none of the elements in $LT(G)$ gives the Hilbert dimension of $I$.

Lastly, we define the following two sets:

\begin{definition}
Let $\mu$ be a locally finite positive Borel measure on $\RR^n$, $D$ be a dyadic grid, and $Q \in D$. Define $A_Q$ to be the intersection of all algebraic sets $A$ such that $\mu(Q \setminus A) = 0$.
\end{definition}

\begin{definition}
Let $\mu$ be a locally finite positive Borel measure on $\RR^n$, $D$ be a dyadic grid, and $Q \in D$. Define $I_Q$ to be the ideal of all polynomials in $\RR[\vec x]$ which vanish on $A_Q$. 
\end{definition}

Informally, $A_Q$ and $I_Q$ give a canonical representation of all the linear dependences among the monomials inside $L^2_Q(\mu)$. Computing a generating set for $I_Q$ given $A_Q$ is a difficult problem, in the sense that there is no general algorithm for producing such sets and individual cases must be solved heuristically. However, provided that we can find a generating set for $I_Q$, we can then leverage a Gr\"{o}bner basis to compute a basis for $P_{Q,F^n_k}(\mu)$.

\section{Dimensions of Alpert Spaces}
\label{sec:dimension}

Let $\mu$ be a locally finite positive Borel measure on $\RR^n$, $D$ be a dyadic grid, and $U \subseteq L^2_{\loc}(\mu)$. The standard construction of an Alpert basis selects, for every $Q \in D$, an orthonormal basis for $L^2_{Q, U, U}(\mu)$ and the union of each of these bases gives the Alpert basis for $L^2(\mu)$. \footnote{There is one additional wrinkle concerning dyadic tops which we have omitted here for clarity of exposition. The complete explanation is given in section \ref{sec:variable-alpert-bases}.}

It is, however, useful to uncouple the set of component functions from the set of orthogonality conditions; this is the motivation behind our definition of $L^2_{Q, U, V}(\mu)$, where $U$ and $V$ are both taken to be finite sets in $L^2_{\loc}(\mu)$. This allows us to construct an Alpert space by starting with $L^2_{Q, U, \varnothing}(\mu)$ and then imposing orthogonality to the elements of $V$ one function at a time.

\begin{lemma}
\label{thm:dimension-thm}
Let $\mu$ be a locally finite positive Borel measure on $\RR^n$, $D$ be a dyadic grid, $Q \in D$, $U, V \subseteq L^2_{\loc}(\mu)$ be finite sets, and $f \in L^2_{\loc}(\mu)$ be a function such that $f \notin V$.  Then either $\dim{L^2_{Q, U, V \cup \{f\}}(\mu)} = \dim{L^2_{Q, U, V}(\mu)}$, if $f$ is orthogonal to all of $L^2_{Q, U, V}(\mu)$, or $\dim{L^2_{Q, U, V \cup \{f\}}(\mu)} = \dim{L^2_{Q, U, V}(\mu)} - 1$ otherwise.
\end{lemma}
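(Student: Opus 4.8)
The plan is to recognize the map $g \mapsto \langle g, f\rangle_{L^2(\mu)}$ as a linear functional on the finite-dimensional space $L^2_{Q,U,V}(\mu)$, and to identify $L^2_{Q,U,V\cup\{f\}}(\mu)$ as its kernel. First I would note that, since $f \notin V$ but $f$ may or may not already be orthogonal to $V$, the set $V \cup \{f\}$ genuinely has one more element than $V$, and by definition $L^2_{Q,U,V\cup\{f\}}(\mu)$ consists of exactly those $g \in L^2_{Q,U,V}(\mu)$ that additionally satisfy $\int_Q g\,f\,d\mu = 0$. So $L^2_{Q,U,V\cup\{f\}}(\mu) = \ker \Lambda$, where $\Lambda \colon L^2_{Q,U,V}(\mu) \to \RR$ is the functional $\Lambda(g) = \int_Q g(x) f(x)\,d\mu(x)$. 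One should first check $\Lambda$ is well-defined and finite: every $g \in L^2_{Q,U,V}(\mu)$ is supported on $Q$ and lies in $L^2_Q(\mu)$, while $f$ restricted to $Q$ lies in $L^2_Q(\mu)$ because $f \in L^2_{\loc}(\mu)$ and $\mu$ is locally finite, so Cauchy--Schwarz gives $|\Lambda(g)| \le \|g\|_{L^2_Q(\mu)} \|f\|_{L^2_Q(\mu)} < \infty$.

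Next I would invoke the rank--nullity theorem. The space $L^2_{Q,U,V}(\mu)$ is finite-dimensional (it is a subspace of $\bigoplus_{Q' \in C(Q)} P_{Q',U}(\mu)$, which is finite-dimensional since $U$ is finite), so $\dim \ker \Lambda = \dim L^2_{Q,U,V}(\mu) - \operatorname{rank}\Lambda$, and $\operatorname{rank}\Lambda$ is either $0$ or $1$ depending on whether $\Lambda$ is the zero functional. The functional $\Lambda$ is identically zero precisely when $\int_Q g\,f\,d\mu = 0$ for every $g \in L^2_{Q,U,V}(\mu)$, i.e.\ precisely when $f$ is orthogonal (in $L^2_Q(\mu)$, equivalently in $L^2(\mu)$ since everything is supported on $Q$) to all of $L^2_{Q,U,V}(\mu)$. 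This is exactly the dichotomy in the statement: in the orthogonal case $\operatorname{rank}\Lambda = 0$ and the dimension is unchanged; otherwise $\operatorname{rank}\Lambda = 1$ and the dimension drops by exactly $1$.

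There is essentially no hard obstacle here; this is a clean rank--nullity argument. The only points requiring a moment's care are the bookkeeping observations: that $L^2_{Q,U,V\cup\{f\}}(\mu)$ really is the kernel of a \emph{single} added constraint (rather than something more subtle arising from $f$ interacting with $V$), that the ambient space is finite-dimensional so rank--nullity applies, and that ``orthogonal to all of $L^2_{Q,U,V}(\mu)$'' is literally the condition $\Lambda \equiv 0$. Each of these follows immediately from the definitions in Section~\ref{sec:preliminaries}.
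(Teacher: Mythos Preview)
Your proof is correct. Both arguments rest on the same observation---that $L^2_{Q,U,V\cup\{f\}}(\mu)$ is the kernel of the single linear functional $g \mapsto \int_Q gf\,d\mu$ on the finite-dimensional space $L^2_{Q,U,V}(\mu)$---but you invoke rank--nullity directly, whereas the paper carries out the argument by hand: it picks a basis $\{b_1,\dots,b_k\}$ with $\langle b_1,f\rangle \neq 0$, replaces each $b_i$ ($i\ge 2$) by $b_i + c_i b_1$ to force orthogonality to $f$, and checks linear independence of the resulting $k-1$ vectors. Your version is cleaner and slightly more conceptual; the paper's has the minor practical advantage of exhibiting an explicit basis for the smaller space, which is in the spirit of the constructive algorithm referenced later in the section.
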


\begin{proof}
If $f$ is orthogonal to all of $L^2_{Q, U, V}(\mu)$ then $L^2_{Q, U, V \cup \{f\}}(\mu)$ and $L^2_{Q, U, V}(\mu)$ are the same space and trivially have the same dimension. Suppose instead that $f$ is not orthogonal to all of $L^2_{Q, U, V}(\mu)$. Then let $\{b_1,...,b_k\}$ be a basis for $L^2_{Q, U, V}(\mu)$ and suppose without loss of generality that $f$ is not orthogonal to $b_1$.

Now for $i=2,...,k$, define the constant $c_i \in \RR$ as
\[c_i = -\frac{\int_Q b_i(x)f(x)\dd{\mu(x)}}{\int_Q b_1(x)f(x)\dd{\mu(x)}}.\]
This gives
\[\left\langle b_i + c_ib_1, f \right\rangle = \int_Q b_i(x)f(x) \dd{\mu(x)}  - \frac{\int_Q b_i(x)f(x)\dd{\mu(x)}}{\int_Q b_1(x)f(x)\dd{\mu(x)}} \int_Q b_1(x)f(x)\dd{\mu(x)} = 0\]
so $b_i + c_ib_1$ is orthogonal to $f$. Now suppose that $\{b_i+c_ib_1\}_{i=2,...,k}$ contains a linear dependence. Then we would have
\[\sum_{i+2}^k a_i (b_i+c_ib_1) = 0\]
for some constants $a_i \in \RR$, $i=2,...k$ not all zero. Rearranging gives
\[ \left(\sum_{i+2}^k a_i c_i \right)b_1+ \sum_{i+2}^k a_i b_i = 0\]
which is impossible as $\{b_1,...,b_k\}$ is a basis for $L^2_{Q, U, V}(\mu)$. Consequently $\{b_i+c_ib_1\}_{i=2,...,k}$ is linearly independent and forms a basis for $L^2_{Q, U, V \cup \{f\}}(\mu)$, and we conclude that $\dim{L^2_{Q, U, V \cup \{f\}}(\mu)} = \dim{L^2_{Q, U, V}(\mu)} - 1$.
\end{proof}

The key insight here is that $L^2_{Q, U, \varnothing}(\mu)$ is a finite-dimensional vector space, and that any $L^2_{Q,U,V}(\mu)$ is a subspace of $L^2_{Q, U, \varnothing}(\mu)$. In particular, suppose that $V = \{v_1,...v_k\}$ is a finite set of functions none of which are orthogonal to $L^2_{Q, U, \varnothing}(\mu)$. Then by Lemma \ref{thm:dimension-thm}, each $L^2_{Q,U, \{v_i\}}(\mu)$ is a hyperplane inside $L^2_{Q, U, \varnothing}(\mu)$, and
\[L^2_{Q,U,V}(\mu) = \bigcap_{i=1,...,k} L^2_{Q,U, \{v_i\}}(\mu).\]

Since $V$ is finite by definition, applying Lemma \ref{thm:dimension-thm} iteratively to every function in $V$ yields upper and lower bounds for the dimension of $L^2_{Q, U, V}(\mu)$.

\begin{theorem}
\label{thm:dimension-inequality}
Let $\mu$ be a locally finite positive Borel measure on $\RR^n$, $D$ be a dyadic grid, $Q \in D$, and $U, V \subseteq L^2_{\loc}(\mu)$ be finite sets. Then
\[ \sum_{Q' \in C(Q)} \dim P_{Q',U}(\mu) - \dim P_{Q,V}(\mu) \leq \dim{L^2_{Q, U, V}(\mu)} \leq \sum_{Q' \in C(Q)} \dim P_{Q',U}(\mu).\]
\end{theorem}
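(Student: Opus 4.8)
The plan is to sandwich $\dim L^2_{Q,U,V}(\mu)$ between the two quantities, using in both directions the identity $L^2_{Q,U,\varnothing}(\mu) = \bigoplus_{Q'\in C(Q)} P_{Q',U}(\mu)$ recorded above, and invoking Lemma~\ref{thm:dimension-thm} for the lower estimate. For the upper bound: the children of $Q$ partition $Q$, so functions lying in $P_{Q',U}(\mu)$ for distinct $Q'\in C(Q)$ have disjoint supports and the displayed sum is an internal direct sum inside $L^2_Q(\mu)$; consequently $L^2_{Q,U,\varnothing}(\mu)$ is finite-dimensional with $\dim L^2_{Q,U,\varnothing}(\mu) = \sum_{Q'\in C(Q)}\dim P_{Q',U}(\mu)$, each summand being at most $\#U$. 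Since $L^2_{Q,U,V}(\mu)$ is a subspace of $L^2_{Q,U,\varnothing}(\mu)$, the right-hand inequality is immediate.

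For the lower bound the first move is to replace $V$ by an economical subset. For $f\in L^2_Q(\mu)$, the pairing $\int_Q f\,q\,\dd{\mu}$ depends on $q$ only through its restriction $\vec 1_Q q$ and is linear in it, so $f$ is orthogonal to every element of $V$ exactly when it is orthogonal to every element of $P_{Q,V}(\mu) = \vspan\{\vec 1_Q q\}_{q\in V}$. Extract from the finite family $\{\vec 1_Q q\}_{q\in V}$ a basis of $P_{Q,V}(\mu)$, indexed by a subset $W\subseteq V$; then $\#W = \dim P_{Q,V}(\mu)$, and $L^2_{Q,U,V}(\mu) = L^2_{Q,U,W}(\mu)$, the nontrivial inclusion holding because any $f$ orthogonal to all of $W$ is orthogonal to $\vspan\{\vec 1_Q w\}_{w\in W} = P_{Q,V}(\mu)$ and hence to all of $V$.

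Now enumerate $W = \{w_1,\dots,w_d\}$ with $d = \dim P_{Q,V}(\mu)$ and set $V_j = \{w_1,\dots,w_j\}$ for $0\le j\le d$, so $V_0 = \varnothing$ and $V_d = W$. Passing from $V_{j-1}$ to $V_j$ adjoins the single function $w_j\notin V_{j-1}$, so Lemma~\ref{thm:dimension-thm} applies and gives $\dim L^2_{Q,U,V_j}(\mu)\ge \dim L^2_{Q,U,V_{j-1}}(\mu) - 1$. Chaining these $d$ inequalities and using the upper-bound computation for the base case $j=0$ yields
\[\dim L^2_{Q,U,V}(\mu) = \dim L^2_{Q,U,W}(\mu)\ \ge\ \dim L^2_{Q,U,\varnothing}(\mu) - d\ =\ \sum_{Q'\in C(Q)}\dim P_{Q',U}(\mu) - \dim P_{Q,V}(\mu),\]
which is the left-hand inequality.

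Apart from this, the proof is essentially bookkeeping on top of Lemma~\ref{thm:dimension-thm}. The only step calling for a little care is the reduction of $V$ to the basis $W$: it is there that one passes from the crude estimate ``the dimension drops by at most $\#V$'' to the sharp one involving $\dim P_{Q,V}(\mu)$, exploiting that orthogonality against a function sees only its restriction to $Q$. I do not anticipate any real obstacle.
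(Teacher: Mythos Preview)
Your proof is correct and follows essentially the same route as the paper: reduce $V$ to a subset $W$ (the paper calls it $V_0$) whose restrictions to $Q$ form a basis of $P_{Q,V}(\mu)$, observe that this does not change the Alpert space, and then iterate Lemma~\ref{thm:dimension-thm} over the $\#W=\dim P_{Q,V}(\mu)$ elements to get the lower bound, with the upper bound coming from $L^2_{Q,U,V}(\mu)\subseteq L^2_{Q,U,\varnothing}(\mu)=\bigoplus_{Q'\in C(Q)}P_{Q',U}(\mu)$. The only cosmetic difference is that you argue $L^2_{Q,U,V}(\mu)=L^2_{Q,U,W}(\mu)$ directly from linearity of the pairing, whereas the paper phrases the same step as repeated invocations of the ``trivial'' case of Lemma~\ref{thm:dimension-thm}.
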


\begin{proof}
Let $V_0 \subseteq V$ be a maximal subset such that $\vec 1_Q V_0$ is linearly independent. Suppose that $f \in L^2_{\loc}(\mu)$ is a function which can be expressed as a linear combination of the elements in $V_0$. Since $L^2_{Q, U, V_0}(\mu)$ is orthogonal to all of $V_0$, it is also orthogonal to $f$. Then by applying the first conclusion of Lemma \ref{thm:dimension-thm} to each $f \in V \setminus V_0$, we conclude that $L^2_{Q, U, V}(\mu)$ and $L^2_{Q, U, V_0}(\mu)$ are the same space.

Now since $\dim P_{Q,V}(\mu) = \dim P_{Q,V_0}(\mu) = \#V_0$, applying Lemma \ref{thm:dimension-thm} to each element in $V_0$ gives 
\[ \dim{L^2_{Q, U, \varnothing}(\mu)} - \#V_0 \leq \dim{L^2_{Q, U, V_0}(\mu)} \leq \dim{L^2_{Q, U, \varnothing}(\mu)}\]
and consequently
\[ \sum_{Q' \in C(Q)} \dim P_{Q',U}(\mu) - \dim P_{Q,V}(\mu) \leq \dim{L^2_{Q, U, V}(\mu)} \leq \sum_{Q' \in C(Q)} \dim P_{Q',U}(\mu)\]
as desired.
\end{proof}

The following example shows that both sides of the inequality in Theorem \ref{thm:dimension-inequality} are achievable, and therefore that finding the dimension of an Alpert space requires investigation of the underlying geometry of $\mu$ as it relates to $U$ and $V$.

\begin{example}
\label{ex:extra-orthogonality}
Take $\mu$ to be Lebesgue measure on $\RR$, and $D$ to be a dyadic grid containing the interval $I = [-1,1)$. Let $I_l$ and $I_r$ denote the left and right subintervals of $I$. If we take $U = V_0 = \{1\}$ then $L^2_{I,U,V_0}(\mu)$ is the usual one-dimensional Haar space spanned by $\vec 1_{I_l} - \vec 1_{I_r}$. If we now expand the set of orthogonality conditions to $V_1 = \{1,x\}$, we see that  $L^2_{I,U,V_0}(\mu)$ contains no non-trivial functions orthogonal to $x$ and so $L^2_{I,U,V_1}(\mu)$ has dimension zero.

Instead, if we take $V_2 = \{1,x^2\}$ we see that the Haar functions in $L^2_{I,U,V_0}(\mu)$ are already orthogonal to $x^2$---indeed they are orthogonal to every even function on $\RR$. Therefore $L^2_{I,U,V_0}(\mu)$ and $L^2_{I,U,V_2}(\mu)$ are the same space and have the same dimension, despite the additional orthogonality condition.
\end{example}

When it comes to constructing a particular Alpert basis, this is not a serious problem. The algorithm for constructing Alpert bases---see \cite{alpert1992} for its details---already requires finding a non-orthogonal basis function for each orthogonality condition to be introduced. Any ``freebies" among the set of orthogonality conditions will be discovered in the course of performing the algorithm, and Theorem \ref{thm:dimension-thm} guarantees that there are no other kinds of bad behaviour to be concerned about.

Lastly, we emphasize that these considerations are only a concern when the additional orthogonality properties allowed by Alpert bases are of interest; this is equivalent to asking that a basis for $L^2_{Q, U, U}(\mu)$ additionally have some elements belonging to $L^2_{Q, U, V}(\mu)$ for some $V \supseteq U$. If any basis for $L^2_{Q, U, U}(\mu)$ will suffice, the situation is much simpler.

\begin{corollary}
\label{thm:simple-dimension-equality}
Let $\mu$ be a locally finite positive Borel measure on $\RR^n$, $D$ be a dyadic grid, $Q \in D$, and $U, V \subseteq L^2_{\loc}(\mu)$ be finite sets such that $V \subseteq U$. Then
\[\dim{L^2_{Q, U, V}(\mu)} = \sum_{Q' \in C(Q)} \dim P_{Q',U}(\mu) - \dim P_{Q,V}(\mu).\]
\end{corollary}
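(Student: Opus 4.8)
The plan is to use the hypothesis $V \subseteq U$ to place the orthogonality conditions \emph{inside} the ambient space $L^2_{Q,U,\varnothing}(\mu)$, so that imposing them becomes a genuine orthogonal-complement operation in a finite-dimensional inner product space and none of the ``freebies'' of the kind seen in Example~\ref{ex:extra-orthogonality} can occur.

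First I would check that $P_{Q,V}(\mu) \subseteq L^2_{Q,U,\varnothing}(\mu)$. Indeed, for $v \in V$ we have $\vec 1_Q v = \sum_{Q' \in C(Q)} \vec 1_{Q'} v$, and since $v \in V \subseteq U$ each summand lies in $\vec 1_{Q'} U$; hence $\vec 1_Q v \in L^2_{Q,U,\varnothing}(\mu)$ and so $P_{Q,V}(\mu) \subseteq L^2_{Q,U,\varnothing}(\mu)$. This is exactly where the hypothesis is needed: without $V \subseteq U$ the restriction $\vec 1_Q v$ need not belong to the ambient space, which is precisely the mechanism behind Example~\ref{ex:extra-orthogonality}.

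Next I would observe that for $f \in L^2_{Q,U,\varnothing}(\mu)$ the condition $\int_Q f v \, \dd{\mu} = 0$ coincides with $\langle f, \vec 1_Q v\rangle = 0$ in $L^2_Q(\mu)$, so that $L^2_{Q,U,V}(\mu)$ is precisely the set of vectors of $L^2_{Q,U,\varnothing}(\mu)$ orthogonal to the subspace $P_{Q,V}(\mu)$. Since $L^2_{Q,U,\varnothing}(\mu)$ is a finite-dimensional subspace of the Hilbert space $L^2_Q(\mu)$, the induced inner product on it is positive definite, and the orthogonal-complement dimension formula gives $\dim L^2_{Q,U,V}(\mu) = \dim L^2_{Q,U,\varnothing}(\mu) - \dim P_{Q,V}(\mu)$. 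Combining this with the direct-sum identity $L^2_{Q,U,\varnothing}(\mu) = \bigoplus_{Q'\in C(Q)} P_{Q',U}(\mu)$ — a genuine direct sum because distinct children have disjoint supports — I get $\dim L^2_{Q,U,\varnothing}(\mu) = \sum_{Q'\in C(Q)} \dim P_{Q',U}(\mu)$ and hence the claimed equality.

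I do not expect a real obstacle; the only point requiring care is the first step, where $V \subseteq U$ is what forces $P_{Q,V}(\mu)$ into the ambient space. One could equally well argue from Theorem~\ref{thm:dimension-inequality}, whose lower bound already equals the desired quantity: running the iteration in its proof over a maximal subset $V_0 \subseteq V$ with $\vec 1_Q V_0$ linearly independent, no step hits the ``orthogonal for free'' case of Lemma~\ref{thm:dimension-thm}, since each $\vec 1_Q v_i \in L^2_{Q,U,\varnothing}(\mu)$ is linearly independent from $\vec 1_Q v_1,\dots,\vec 1_Q v_{i-1}$ and therefore cannot be orthogonal to $L^2_{Q,U,\{v_1,\dots,v_{i-1}\}}(\mu)$.
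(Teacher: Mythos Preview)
Your argument is correct and is essentially the same as the paper's: you use $V\subseteq U$ to embed $P_{Q,V}(\mu)$ into $L^2_{Q,U,\varnothing}(\mu)$ and then invoke the orthogonal-complement dimension formula in that finite-dimensional space, which is exactly the paper's (terser) proof. Your additional remark about rerunning the iteration of Lemma~\ref{thm:dimension-thm} is a valid alternative justification but is not needed.
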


\begin{proof}
Since $V \subseteq U$, we have $P_{Q,V} \subseteq L^2_{Q, U, \varnothing}(\mu)$. $P_{Q,V}$ is therefore the orthogonal complement of $L^2_{Q, U, V}(\mu)$ inside $L^2_{Q, U, \varnothing}(\mu)$ as finite-dimensional vector spaces, and the result follows immediately.
\end{proof}

So the problem of finding dimensions and bases for Alpert spaces $L^2_{Q, U, U}(\mu)$ reduces to the equivalent problem for component spaces $P_{Q,U}(\mu)$. A maximal set of linearly independent functions in $U$ gives a basis for $P_{Q,U}(\mu)$, so given $Q$, $U$, and $\mu$ it suffices to find such a set. A brute force algorithm could proceed as follows: begin with the entire set $U$ and iteratively apply the Gram Schmidt process to identify and eliminate dependent functions until a basis is found. This is rather inefficient, so in section \ref{sec:polynomial-alpert-wavelets} we will give a more sophisticated algorithm when working with polynomial Alpert bases.

\section{Polynomial Alpert Bases}
\label{sec:polynomial-alpert-wavelets}

In section \ref{sec:dimension}, we saw that the dimension of an Alpert space $L^2_{Q, U, V}(\mu)$ depended non-trivially on the underlying geometry of both the measure $\mu$ and the set of functions $U, V$ in question. We now turn our attention to the specific case of polynomial Alpert bases; the additional structure this affords will allow us to draw more precise conclusions. We consider spaces $L^2_{Q,U,U}(\mu)$ where $U$ is taken to be some $F^n_k$, the set of all monomials in $n$ variables up to some fixed degree. 

In \cite{rahm2019weighted}, Rahm, Sawyer, and Wick observed that these polynomial Alpert spaces over certain measures had lower dimension than the corresponding spaces would have over Lebesgue measure. By Lemma \ref{thm:simple-dimension-equality} we have
\[\dim{L^2_{Q, F^n_k, F^n_k}(\mu)} = \sum_{Q' \in C(Q)} \dim P_{Q',F^n_k}(\mu) - \dim P_{Q,F^n_k}(\mu).\]
This behaviour is therefore explained by observing that, unlike in Lebesgue measure, the monomials in $F^n_k$ are not guaranteed to be linearly independent and consequently that $\dim P_{Q,F^n_k}(\mu)$ might be less than $\#F^n_k$.

This reduces the question of finding $\dim{L^2_{Q, F^n_k, F^n_k}(\mu)}$ to the question of finding $\dim P_{Q,F^n_k}(\mu)$ for arbitrary $Q$. It is not immediately obvious that this can be done easily; even if some subset of $F^n_k$ is linearly independent in $L^2(\mu)$, it might be linearly dependent in $L^2_Q(\mu)$ for some $Q$.

Recall that for a given $Q \in D$ we have defined $A_Q$ to be the intersection of all algebraic sets $A$ such that $\mu(Q \setminus A) = 0$, and $I_Q$ to be ideal of all polynomials in $\RR[\vec x]$ which vanish on $A_Q$. Provided that we can find a generating set for $I_Q$, we can use a Gr\"{o}bner basis to find a basis for $P_{Q,F^n_k}(\mu)$.

\begin{theorem}
\label{thm:classification-theorem}
Let $\mu$ be a locally finite positive Borel measure on $\RR^n$, $D$ be a dyadic grid, $Q \in D$, and $k \in \NN$. Also let $M$ be a graded monomial order and $G$ be the reduced Gr\"{o}bner basis for $I_Q$.  Define $\gdep_k(G)$ to be the set of all monomials $u \in F^n_k$ which are divisible by some monomial $v \in LT(G)$, and define $\gind_k(G)$ to be the complement of $\gdep_k(G)$ in $F^n_k$. Then $\gind_k(G)$ is a basis for $P_{Q,F^n_k}(\mu)$, and consquently $\dim{P_{Q,F^n_k}(\mu)} = \#F^n_k - \#\gdep_k(G)$.
\end{theorem}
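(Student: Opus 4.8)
The plan is to identify $P_{Q,F^n_k}(\mu)$ with a quotient of the space $W := \vspan F^n_k$ of polynomials of degree less than $k$, and then to apply the standard theory of Gr\"obner bases, using the graded hypothesis on $M$ to keep total degrees under control.

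First I would establish a kernel lemma: for $p \in \RR[\vec x]$ one has $\vec 1_Q \cdot p = 0$ in $L^2_Q(\mu)$ if and only if $p \in I_Q$. The forward implication is easy --- if $p$ vanishes $\mu$-a.e.\ on $Q$ then $Z(\{p\})$ is an algebraic set whose complement in $Q$ is $\mu$-null, hence $A_Q \subseteq Z(\{p\})$ and $p$ vanishes on $A_Q$. For the converse one needs $\mu(Q \setminus A_Q) = 0$, which is where the Noetherian property of $\RR[\vec x]$ enters: writing $A_Q = \bigcap_i A_i$ over all algebraic sets $A_i$ with $\mu(Q \setminus A_i) = 0$, the ideal generated by the union of the vanishing ideals of the $A_i$ is finitely generated, and chasing this through shows that $A_Q$ is already a \emph{finite} sub-intersection $A_{i_1} \cap \dots \cap A_{i_m}$ (in particular $A_Q$ is itself algebraic), so subadditivity of $\mu$ gives $\mu(Q \setminus A_Q) \le \sum_j \mu(Q \setminus A_{i_j}) = 0$. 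Granting the lemma, the evaluation map $p \mapsto \vec 1_Q \cdot p$ restricted to $W$ has kernel exactly $W \cap I_Q$, so $P_{Q,F^n_k}(\mu) \cong W / (W \cap I_Q)$, with $\vec 1_Q \cdot x^\alpha$ corresponding to the class of $x^\alpha$; it then suffices to prove that $\gind_k(G)$ descends to a vector-space basis of $W / (W \cap I_Q)$.

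By Macaulay's basis theorem --- the standard fact, proved via the division algorithm, that the monomials divisible by no element of $LT(G)$ descend to a basis of $\RR[\vec x] / I_Q$ (see e.g.\ \cite{dummitfoote}) --- and since $\gind_k(G)$ is precisely the set of such monomials lying in $F^n_k$, linear independence of $\gind_k(G)$ modulo $W \cap I_Q$ is immediate: a nontrivial relation $\sum_{x^\alpha \in \gind_k(G)} c_\alpha x^\alpha \in I_Q$ would, as $G$ is a Gr\"obner basis, have its $M$-leading monomial divisible by some element of $LT(G)$, contradicting that this monomial lies in $\gind_k(G)$. For spanning, given $p \in W$ I would run the multivariate division algorithm of $p$ against $G$ to obtain a normal form $\bar p$ supported on $\gind_k(G)$ with $p - \bar p \in I_Q$; the only place the \emph{graded} hypothesis is used is in verifying that the division never leaves $W$, since for a graded order $LT(r)$ has total degree $\deg r$ and each reduction step subtracts the monomial multiple $\tfrac{LT(r)}{LT(g)}\, g$, which again has total degree $\deg r$, so every polynomial arising has degree at most $\deg p < k$ and in particular $\bar p \in W$. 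Then $p - \bar p \in W \cap I_Q$, so $p \equiv \bar p$ modulo $W \cap I_Q$ and $\gind_k(G)$ spans. Combining, $\gind_k(G)$ is a basis of $P_{Q,F^n_k}(\mu)$, and as $\gind_k(G)$ and $\gdep_k(G)$ partition $F^n_k$ we get $\dim P_{Q,F^n_k}(\mu) = \# F^n_k - \# \gdep_k(G)$.

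I expect the main obstacle to be the kernel lemma --- in particular the Noetherian argument that $\mu(Q \setminus A_Q) = 0$, which is exactly what licenses passing between the analytic object $P_{Q,F^n_k}(\mu)$ and the algebraic object $\RR[\vec x] / I_Q$ --- together with pinning down where the graded hypothesis is needed (the ideal $\langle x - y^2 \rangle$ under the lexicographic order $x > y$ shows the statement fails for non-graded orders, since there the standard monomials of degree less than $2$ span a proper subspace of the corresponding quotient). The remaining Gr\"obner-basis bookkeeping is routine, and I would keep it terse, leaning on a textbook reference for Macaulay's theorem and the division algorithm.
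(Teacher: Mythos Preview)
Your argument is correct and at its core matches the paper's: both prove linear independence of $\gind_k(G)$ by observing that any relation would lie in $I_Q$ and hence have leading term divisible by some element of $LT(G)$, and both prove spanning by Gr\"obner reduction, using gradedness of $M$ to keep all intermediate polynomials in degree $<k$. The packaging differs in two ways worth noting. First, you frame everything via the isomorphism $P_{Q,F^n_k}(\mu) \cong W/(W \cap I_Q)$ and invoke Macaulay's theorem and the division algorithm by name, whereas the paper carries out the reduction by hand (iteratively replacing each monomial in $\gdep_k(G)$ by lower-order terms until only $\gind_k(G)$ remains); your version is cleaner and makes the role of the graded hypothesis more transparent, and your $\langle x - y^2\rangle$ counterexample is a nice addition. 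Second, and more substantively, you isolate and prove the kernel lemma $\vec 1_Q \cdot p = 0 \Leftrightarrow p \in I_Q$, including the Noetherian step showing $\mu(Q \setminus A_Q) = 0$; the paper simply asserts ``Then $p \in I_Q$'' without justification, so your version is strictly more complete on this point.
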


\begin{proof}
Suppose $\gind_k(G)$ contains a linear dependence in $L_Q^2(\mu)$, given by some polynomial $p(x)=0$. Then $p \in I_Q$, so $LT(p) \in LT(G)$ and $LT(p) \notin \gind_k(G)$. This is a contradiction, so $\gind_k(G)$ is linearly independent in $L_Q^2(\mu)$ and it remains to show that $\gind_k(G)$ is maximal.

Let $u_0 \in \gdep_k(G)$ and consider the set $T \coloneqq \{u_0\} \cup \gind_k(G)$. Since $u_0 \in LT(G)$, there is some monomial $v_0 \in F^n_k$ and some polynomial $p_0 \in G$ such that $u_0 = LT(v_0 p_0)$. Then since the monomial order $M$ is graded and $LT(v_0 p_0)$ has degree less than $k$, all monomials in $v_0 p_0$ must have degree less than $k$ and so $v_0 p_0$ is a linear dependence in $F^n_k$. It now suffices to show that $v_0 p_0$ can be expressed using only monomials in $T$.

Let $u_1 \notin T$ be a non-leading monomial occurring in $v_0 p_0$. Since $u_1 \in \gdep_k(G)$ there is some monomial $v_1 \in F^n_k$ and some polynomial $p_1 \in G$ such that $u_1 = LT(v_1 p_1)$. The non-leading monomials in $v_1 p_1$ all have order less than $u_1$, so we can replace $u_1$ in $v_0 p_0$ with lower order monomials. Since $\gdep_k(G)$ is finite, iterating this process a sufficient number of times yields a representation for $v_0 p_0$ using only monomials in $T$. Consequently $T$ is linearly dependent in $L_Q^2(\mu)$ for any choice of $u_0 \in \gdep_k(G)$, so we conclude that $\gind_k(G)$ must be a basis for $P_{Q,F^n_k}(\mu)$.
\end{proof}

Our statement of this theorem is framed in terms of a particular component space $P_{Q,F^n_k}(\mu)$, but we highlight that the single Gr\"{o}bner basis $G$ encodes the necessary information regarding this space for all values $n \in \NN$. This is the primary contribution of Theorem \ref{thm:classification-theorem}, since finding the dimension of any single $P_{Q,F^n_k}(\mu)$ could otherwise be done with a brute force Gram-Schmidt algorithm. Building on this idea, we can also use $G$ to determine how the dimension of $P_{Q,F^n_k}(\mu)$ grows---and therefore also how $\dim{L^2_{Q, F^n_k, F^n_k}(\mu)}$ grows---as $k$ increases.

\begin{lemma}
\label{thm:unbounded-growth-lemma}
Let $I$ be an ideal in $\RR[\vec x]$ with Hilbert dimension $d$. Then there exists $k \in \NN$ such that $x^\alpha \in LT(I)$ for all $\alpha \in \NN^n$ with more than $d$ entries greater than $k$.
\end{lemma}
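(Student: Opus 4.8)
The plan is to reduce the problem to a statement about monomials not in $LT(I)$ — i.e. the "standard monomials" or the complement of $\langle LT(I)\rangle$ — and show this complement cannot contain a monomial with too many large exponents.

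First I would recall the combinatorial characterization of Hilbert dimension. Let $E = LT(I) \subseteq \NN^n$ be the set of exponent vectors of leading monomials; by the equivalent formulation of the Gröbner basis definition, $\langle LT(I) \rangle = \langle E \rangle$, and $E$ is an \emph{up-set} (if $\alpha \in E$ and $\beta \geq \alpha$ componentwise then $\beta \in E$). The Hilbert dimension $d$ is the largest size of a subset $S \subseteq \{1,\dots,n\}$ of variables such that no element of $E$ is supported entirely on $S$. Equivalently, $d$ is the largest $|S|$ such that the "coordinate subspace monomials" $\{x^\alpha : \operatorname{supp}(\alpha) \subseteq S\}$ are all \emph{outside} $\langle E\rangle$.

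The key step is a pigeonhole/Dickson-type argument. Suppose $\alpha \in \NN^n$ has more than $d$ entries $\geq k$ for a $k$ to be chosen, and suppose toward contradiction that for every such $\alpha$ there is arbitrarily large $k$ with $x^\alpha \notin LT(I)$ — more precisely, suppose no finite $k$ works, so for every $k$ there is some $\alpha^{(k)}$ with more than $d$ large entries and $x^{\alpha^{(k)}} \notin \langle E \rangle$. Let $S_k \subseteq \{1,\dots,n\}$ be a set of $d+1$ coordinates on which $\alpha^{(k)}$ is $\geq k$. Since there are finitely many $(d+1)$-subsets of $\{1,\dots,n\}$, some subset $S$ recurs for infinitely many $k$. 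Now I would argue that the monomial $x^\beta$ with $\beta_i = 0$ for $i \notin S$ and $\beta_i$ equal to the minimum over those infinitely many $k$ of the $i$-th coordinate of $\alpha^{(k)}$ — wait, that minimum could be $0$; instead, fix one large $k_0$ and observe that $x^{\alpha^{(k_0)}}$ divides... no. The cleaner route: because $\langle E\rangle$ has a finite generating set (Dickson's lemma / $E$ has finitely many minimal elements $\gamma^{(1)},\dots,\gamma^{(m)}$), the monomial $x^\alpha$ lies in $\langle E\rangle$ iff some $\gamma^{(j)} \leq \alpha$. If $S$ has size $d+1$ then by definition of Hilbert dimension (since $|S| > d$) there \emph{is} some $\gamma^{(j)}$ supported on $S$; set $k = \max_j \max_i \gamma^{(j)}_i$ — actually $k = \max\{\gamma^{(j)}_i : 1 \le j \le m,\ 1 \le i \le n\}$ suffices. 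Then any $\alpha$ with more than $d$ entries $\geq k$ has, on its set of $\geq k$ coordinates $S$ (of size $> d$), some generator $\gamma^{(j)}$ supported there, and every coordinate of $\gamma^{(j)}$ is $\leq k \leq \alpha_i$ on $S$ while $\gamma^{(j)}_i = 0 \leq \alpha_i$ off $S$; hence $\gamma^{(j)} \leq \alpha$ componentwise and $x^\alpha \in \langle E \rangle = LT(I)$.

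**The main obstacle** I anticipate is getting the quantifiers and the two-sided use of Hilbert dimension exactly right: one must invoke Dickson's lemma (finiteness of the minimal generators of the up-set $LT(I)$, equivalently the Gröbner basis is finite) to even speak of "$k$ large enough", and one must use the \emph{definition} of Hilbert dimension in the direction "any set of more than $d$ variables supports some leading monomial" — this is immediate from the definition since $d$ is the \emph{maximal} size of a set supporting none, so every larger set supports at least one. I would take care to state this as a short lemma or inline remark before the pigeonhole, so the final bound $k := \max\{\gamma_i : x^\gamma \text{ a minimal generator of } LT(I),\ 1\le i\le n\}$ drops out cleanly; then the verification that $x^\alpha \in LT(I)$ for all $\alpha$ with more than $d$ entries exceeding this $k$ is the routine componentwise-divisibility check sketched above.
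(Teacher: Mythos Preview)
Your argument is correct, but it takes a different route from the paper's. The paper argues by contradiction: if no $k$ works, then for each $k$ one picks a monomial $x^{\alpha_k}\notin LT(I)$ with at least $d+1$ large entries, pigeonholes on the finitely many $(d+1)$-subsets of $\{1,\dots,n\}$ to find one subset $S$ recurring infinitely often, and concludes that every monomial supported on $S$ divides some $x^{\alpha_k}$ and hence lies outside $LT(I)$, forcing the Hilbert dimension to be at least $d+1$. Your approach is direct and constructive: invoke Dickson's lemma to list the finitely many minimal generators $\gamma^{(1)},\dots,\gamma^{(m)}$ of $\langle LT(I)\rangle$, set $k=\max_{i,j}\gamma^{(j)}_i$, and then for any $\alpha$ with more than $d$ entries exceeding $k$ use the contrapositive of the Hilbert-dimension definition to locate a generator supported on those coordinates and verify $\gamma^{(j)}\le\alpha$ componentwise. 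Your version buys an explicit value of $k$ and avoids the infinite-sequence extraction; the paper's version is marginally more self-contained in that it does not name Dickson's lemma, though the finiteness it needs is morally the same. One small point worth making explicit in your write-up: when you pass from ``some leading monomial is supported on $S$'' to ``some \emph{minimal generator} $\gamma^{(j)}$ is supported on $S$'', you are using that any divisor of a monomial supported on $S$ is again supported on $S$; this is immediate but should be said.
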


\begin{proof}
Suppose toward a contradiction that there is no such $k$. Then for every $k \in \NN$ we can associate an $x^{\alpha_k} \notin LT(I)$ where at least $d+1$ entries in $\alpha_k$ are greater than $k$. This yields the sequence of monomials $\{x^{\alpha_k}\}_{k \in \NN}$. Since there are only finitely many variables to choose from, there must be a set $S = \{x_{i_1}, x_{i_2}, \dots, x_{i_{d+1}}\}$ of $d+1$ distinct variables such that for any $l \in \NN$ the product $x_{i_1}^l \cdot x_{i_2}^l \cdots x_{i_{d+1}}^l$ divides some monomial in $\{x^{\alpha_k}\}_{k \in \NN}$. Since no $x^{\alpha_k}$ is in $LT(I)$, we conclude that $x_{i_1}^l \cdot x_{i_2}^l \cdots x_{i_{d+1}}^l \notin LT(I)$ for all $l \in \NN$.

Next we see that any product of the variables in $S$ divides $x_{i_1}^l \cdot x_{i_2}^l \cdots x_{i_{d+1}}^l$ for some sufficiently large choice of $l$, and consequently any product of these variables produces a monomial not in $LT(I)$. Therefore $S$ shows that $I$ must have Hilbert dimension at least $d+1$. This completes the contradiction, so such a $k$ must exist.
\end{proof}

We now have the following intuition: by Theorem \ref{thm:classification-theorem}, the leading terms of a Gr\"{o}bner basis for $I_Q$ determine the linearly dependent monomials that need to be excluded to form a basis for $P_{Q,F^n_k}(\mu)$. After making all of these exclusions, what remains should grow like a Lebesgue polynomial function space. While this does not allow us to compute $\dim P_{Q,F^n_k}(\mu)$ directly, it does characterize the growth rate of this dimension as $k$ increases. Here we use big $O$ notation in its usual meaning: $f(x) = O(g(x))$ if for some constant $C>0$ and some $x_0$ we have $|f(x)| > C \cdot g(x)$ for all $x > x_0$.

\begin{proposition}
\label{thm:hilbert-dimension-theorem}
Let $\mu$ be a locally finite positive Borel measure on $\RR^n$, $D$ be a dyadic grid, $Q \in D$, and $k \in \NN$. Suppose that $I_Q$ has Hilbert dimension $d$. Then $\dim P_{Q,F^n_k}(\mu) = O(k^d)$.
\end{proposition}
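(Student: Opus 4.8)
The plan is to read $\dim P_{Q,F^n_k}(\mu)$ directly off Theorem~\ref{thm:classification-theorem} and then bound the resulting count of monomials using Lemma~\ref{thm:unbounded-growth-lemma}. First I would fix a graded monomial order and let $G$ be the reduced Gr\"obner basis for $I_Q$; by Theorem~\ref{thm:classification-theorem}, $\gind_k(G)$ is a basis for $P_{Q,F^n_k}(\mu)$, so the task reduces to showing $\#\gind_k(G) = O(k^d)$. Recall that $\gind_k(G)$ consists exactly of the monomials $x^\alpha$ of degree less than $k$ that are not divisible by any element of $LT(G)$.

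Next I would apply Lemma~\ref{thm:unbounded-growth-lemma} to the ideal $I_Q$, which by hypothesis has Hilbert dimension $d$, to obtain a fixed $k_0 \in \NN$ (independent of $k$) such that $x^\alpha \in LT(I_Q)$ for every $\alpha \in \NN^n$ having more than $d$ entries greater than $k_0$. For such an $\alpha$ we have $x^\alpha = LT(p)$ for some $p \in I_Q$, and the defining property of a Gr\"obner basis then supplies a $q \in G$ with $LT(q)$ dividing $x^\alpha$; hence $x^\alpha \notin \gind_k(G)$. Therefore every monomial in $\gind_k(G)$ has at most $d$ of its $n$ exponents exceeding $k_0$.

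It then remains to count such monomials. Each $x^\alpha \in \gind_k(G)$ is determined by: a subset $S \subseteq \{1,\dots,n\}$ with $\#S \le d$ of coordinates allowed to carry an exponent larger than $k_0$, giving at most $\sum_{j=0}^{d}\binom{n}{j}$ choices; for each coordinate not in $S$, an exponent in $\{0,1,\dots,k_0\}$, giving at most $(k_0+1)^{n}$ choices; and for each of the at most $d$ coordinates in $S$, an exponent in $\{0,1,\dots,k-1\}$ since the total degree is less than $k$, giving at most $k^{d}$ choices. Multiplying, $\#\gind_k(G) \le C\,k^{d}$ with $C = (k_0+1)^{n}\sum_{j=0}^{d}\binom{n}{j}$ depending only on $n$ and $d$ (through $k_0$), so $\dim P_{Q,F^n_k}(\mu) = O(k^{d})$. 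There is no substantial obstacle here: the only care needed is in the final count, keeping the degree bound and the ``few large exponents'' bound from interfering, together with the observation that $k_0$ is pinned down once and for all by Lemma~\ref{thm:unbounded-growth-lemma} rather than growing with $k$. I would also note in passing that, by instead choosing $S$ of size $d$ witnessing the Hilbert dimension and counting the $\binom{k+d-1}{d}$ monomials of degree less than $k$ in those variables alone, one gets a matching lower bound $\dim P_{Q,F^n_k}(\mu) = \Omega(k^{d})$ whenever $\mu(Q) > 0$, though only the upper bound is asserted in the statement.
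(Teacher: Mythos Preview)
Your argument is correct and follows essentially the same route as the paper: invoke Lemma~\ref{thm:unbounded-growth-lemma} to obtain a threshold $k_0$ so that any monomial in $\gind_k(G)$ has at most $d$ exponents exceeding $k_0$, then count such monomials to get the $O(k^d)$ bound. Your write-up is in fact somewhat tighter than the paper's, as you make the link to Theorem~\ref{thm:classification-theorem} explicit and give a precise constant, and your closing remark on the matching $\Omega(k^d)$ lower bound is a nice bonus not recorded in the paper.
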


\begin{proof}
If $d=n$ we are immediately done, so suppose that $d<n$. For any set $S'$ of $d+1$ variables, consider all monomials that can be expressed using only variables in $S'$. At least one such monomial must be a leading monomial within $I_Q$, otherwise this set would give $I_Q$ a Hilbert dimension of at least $d+1$. Thus we cannot have an independent monomial containing arbitrarily high powers of all $d+1$ variables in $S'$. For any choice of variable to have a bounded power, there are $O(k^d)$ such monomials. As there are only finitely many variables to choose, the total number of independent monomials using only variables in $S'$ is at most $O(k^d)$.

Now we generalize this argument to any set of more than $d$ variables. By lemma \ref{thm:unbounded-growth-lemma}, the largest number of variables which can all have arbitrarily large powers and still multiply to an independent monomial is $d$. For any choice of $d$ variables the remaining $n-d$ variables must all have bounded powers, and the total number of such monomials is $O(k^d)$. There are finitely many ways to choose $d$ variables, so even if every such monomial were independent and each choice of $d$ variables yielded a disjoint set of independent monomials, we would have at most $O(k^d)$.
\end{proof}

While this theorem tells us that the growth rate of $\dim P_{Q,F^n_k}(\mu)$ (as a function of $k$) is polynomial of degree $d$, it is not possible to give a general upper bound on the leading coefficient of that polynomial. To see this, take the ideal generated by $x_{n-d}^t \cdot x_{n-d+1}^t \cdots x_{n}^t$ for some $t \in \NN$. This ideal has Hilbert dimension $d$ for any choice of $t$, but the number of monomials outside this ideal grows with $t$.

\section{Variable Alpert Bases}
\label{sec:variable-alpert-bases}

The standard construction of an Alpert basis chooses some set of orthogonality conditions to be applied to the basis functions on each dyadic cube. In this section we show that this can be relaxed: each cube $Q$ can be given a differing set of orthogonality conditions, and with a small modification the resulting functions still form an orthonormal basis for $L^2(\mu)$.

Let $D$ be a dyadic grid on $\RR^n$ and $Q \in D$. We define the \textit{tower} $\Gamma(Q)$ as the set of all cubes in $D$ which contain $Q$. The \textit{top} $T$ of a tower $\Gamma(Q)$ is defined as the countable union of all cubes in the tower. As an immediate consequence of the nesting property of towers, any two towers will either have the same top or two disjoint tops. This gives an equivalence relation on the towers in $D$, where $\Gamma_1$ and $\Gamma_2$ are equivalent if their tops coincide. Let $\tau(D)$ denote the set of unique tops arising from any choice of representatives from each equivalence class. 

Alexis, Sawyer, and Uriarte-Tuero in \cite{alexis2022tops} observed that, besides bases for each $L^2_{Q,U,U}$, an Alpert basis may also require the restrictions to some dyadic tops of functions in $U$. Specifically, for every $f \in U$ and $T \in \tau(D)$, if $\vec 1_T f$ has finite $L^2(\mu)$-norm then it must be included in the Alpert basis. This remains true for our generalization.

Let $\EE^\mu_{Q,U}$ denote orthogonal projection onto $P_{Q,U}(\mu)$. With a slight abuse of notation we will allow a dyadic top $T \in \tau(D)$ in place of the dyadic cube $Q$, so $P_{T,U}(\mu)$ is the space of restrictions of $U$ to $T$ and $\EE^\mu_{T,U}$ is the associated projection. Let $\triangle^\mu_{Q, U, V}$ similarly denote orthogonal projection onto $L^2_{Q,U,V}(\mu)$.

For sets of functions $U \subseteq V$ we will also need to consider the orthogonal complement of $P_{Q,U}(\mu)$ inside $P_{Q,V}(\mu)$. As our notation is already somewhat cumbersome, we will write $P_{Q,V}(\mu) \ominus P_{Q,U}(\mu)$ to denote such a subspace and $\EE^\mu_{Q,V} - \EE^\mu_{Q,U}$ to denote the corresponding projection. Lastly, recall that $P(Q)$ denotes the parent of $Q$.

\begin{theorem}
\label{thm:variable-alpert-bases}
Let $\mu$ be a locally finite positive Borel measure on $\RR^n$, $D$ be a dyadic grid, and $\{U_Q\}_{Q \in D}$ be a collection of finite sets in $L^2_{\loc}(\mu)$ and having the following properties:
\begin{enumerate}
\item $1 \in U_Q$ for every $Q \in D$.
\item $U_Q \subseteq U_{Q'}$ for every $Q \in D$ and every $Q' \in C(Q)$.
\end{enumerate}
For each dyadic top $T \in \tau(D)$, let $U_T = \cap_{Q \subset T} U_Q$. Then
\[ \left\{ \triangle^\mu_{Q, U_Q, U_Q}\right\}_{Q \in D} \cup \left\{ \EE^\mu_{Q, U_Q} - \EE^\mu_{Q, U_{P(Q)}} \right\}_{Q \in D} \cup \left\{\EE^\mu_{T, U_T}\right\}_{T \in \tau(D)} \]
is a complete set of orthogonal projections in $L^2(\mu)$ and
\[f=\sum_{Q \in D} \triangle^\mu_{Q, U_Q, U_Q} f + \sum_{Q \in D} \left(\EE^\mu_{Q, U_Q} - \EE^\mu_{Q, U_{P(Q)}}\right)f + \sum_{T \in \tau(D)} \EE^\mu_{T, U_T} f , \qquad f \in L^2(\mu)\]
where convergence holds both in $L^2(\mu)$ and pointwise $\mu$-almost everywhere. Moreover we have telescoping identities,
\[ \vec 1_Q \sum_{P: Q \subsetneqq P \subseteq R} \triangle^\mu_{P, U_P, U_P} = \EE^\mu_{Q, U_Q} - \vec 1_Q \EE^\mu_{R, U_R} \text{  for all } Q \subsetneqq R \in D \text{ with } U_Q = U_R,\] 
and orthogonality conditions
\[\int_{\RR^n} \triangle^\mu_{Q, U_Q, U_Q} f(x) p(x) \dd{\mu(x)}=0, \quad \text{for all } Q \in D, p \in U_Q.\]
\end{theorem}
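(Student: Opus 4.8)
The plan is to isolate a single local identity relating the projections on a cube to those on its children, and then iterate it in two directions: downward through the grid, to produce the telescoping identities and the oscillation part of the expansion, and upward through each tower, to produce the top projections $\EE^\mu_{T,U_T}$. Throughout write $\triangle_Q := \triangle^\mu_{Q,U_Q,U_Q}$, $\Psi_Q := \EE^\mu_{Q,U_Q} - \EE^\mu_{Q,U_{P(Q)}}$, and $D_m(R)$ for the set of $m$-th generation dyadic descendants of a cube $R$ (so $D_0(R) = \{R\}$ and $D_1(R) = C(R)$). \emph{Step 1: the local identity.} First I would prove, for each $Q \in D$, that
\[ \sum_{Q' \in C(Q)} \EE^\mu_{Q',U_{Q'}} = \triangle_Q + \EE^\mu_{Q,U_Q} + \sum_{Q' \in C(Q)} \Psi_{Q'}, \]
equivalently $\sum_{Q' \in C(Q)} \EE^\mu_{Q',U_Q} = \triangle_Q + \EE^\mu_{Q,U_Q}$. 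This falls out of the two direct-sum decompositions recorded in Section~\ref{sec:preliminaries}: $\bigoplus_{Q' \in C(Q)} P_{Q',U_Q}(\mu) = L^2_{Q,U_Q,\varnothing}(\mu)$ and $L^2_{Q,U_Q,\varnothing}(\mu) = L^2_{Q,U_Q,U_Q}(\mu) \oplus P_{Q,U_Q}(\mu)$. Since the component spaces attached to distinct children have disjoint supports, the left-hand side is the orthogonal projection onto $L^2_{Q,U_Q,\varnothing}(\mu)$, which splits as the two projections on the right; the nesting hypothesis $U_Q \subseteq U_{Q'}$ is exactly what makes $\Psi_{Q'} = \EE^\mu_{Q',U_{Q'}} - \EE^\mu_{Q',U_Q}$ (note $P(Q') = Q$) a genuine orthogonal projection, onto $P_{Q',U_{Q'}}(\mu) \ominus P_{Q',U_Q}(\mu)$.

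\emph{Step 2: mutual orthogonality.} Next I would check that the three families of projections are mutually orthogonal, so that they will form a legitimate resolution of the identity once Step~3 shows they sum to $I$. If two cubes are not nested the ranges of their projections are supported on disjoint sets, and distinct tops are disjoint, so only nested pairs need attention. Here the one fact I would use is: if $Q \subsetneq R$, or if $Q \subseteq T$ for a top $T$, then the restriction to $Q$ of any function lying in $L^2_{R,U_R,U_R}(\mu)$, in $P_{R,U_R}(\mu)$, or in $P_{T,U_T}(\mu)$ belongs to $P_{Q,U_{P(Q)}}(\mu)$ --- because $P(Q) \subseteq R$ (respectively $P(Q) \subseteq T$, since $Q \subseteq T$ forces $P(Q)$ into $Q$'s tower, whose top is $T$), so nesting gives $U_R \subseteq U_{P(Q)}$ (respectively $U_T \subseteq U_{P(Q)}$). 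But $P_{Q,U_{P(Q)}}(\mu)$ is orthogonal to $L^2_{Q,U_Q,U_Q}(\mu)$ (whose elements are orthogonal to all of $U_Q \supseteq U_{P(Q)}$) and, by construction, to the range of $\Psi_Q$. The last same-cube pair is handled by $L^2_{Q,U_Q,U_Q}(\mu) \perp P_{Q,U_Q}(\mu)$, which contains the range of $\Psi_Q$.

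\emph{Step 3: telescoping and completeness.} A short induction on $m$ from the Step~1 identity gives, for every $R \in D$,
\[ \sum_{P \in D_m(R)} \EE^\mu_{P,U_P} = \EE^\mu_{R,U_R} + \sum_{j=0}^{m-1} \left( \sum_{P \in D_j(R)} \triangle_P + \sum_{P \in D_{j+1}(R)} \Psi_P \right). \]
Since $1 \in U_P$ for all $P$, one has $\|\vec 1_P f - \EE^\mu_{P,U_P} f\|_{L^2(\mu)} \le \|\vec 1_P f - \EE^\mu_{P,\{1\}} f\|_{L^2(\mu)}$; summing over $P \in D_m(R)$, and using that $\sum_{P \in D_m(R)}\EE^\mu_{P,\{1\}}$ is the dyadic conditional expectation onto the $m$-th generation partition of $R$, which converges to $\vec 1_R f$ in $L^2(\mu)$ and $\mu$-a.e.\ by the martingale convergence theorem, we get $\sum_{P \in D_m(R)}\EE^\mu_{P,U_P} f \to \vec 1_R f$. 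Letting $m \to \infty$ (Step~2 gives unconditional convergence of the series) yields $\sum_{P \subseteq R} \triangle_P f + \sum_{P \subsetneq R} \Psi_P f = \vec 1_R f - \EE^\mu_{R,U_R} f$. I would then fix a top $T$, choose cubes $Q_j \uparrow T$, observe that the decreasing sequence of finite sets $U_{Q_j}$ stabilizes at $U_T$ and that $\EE^\mu_{Q_j,U_T} f \to \EE^\mu_{T,U_T} f$ (the finite-rank projection depends continuously on the Gram data $\int_{Q_j} p\,q\,\dd{\mu}$ and $\int_{Q_j} f\,p\,\dd{\mu}$, $p,q \in U_T$, which converge by dominated convergence), and pass to the limit to obtain $\sum_{P \subseteq T} \triangle_P f + \sum_{P \subseteq T} \Psi_P f + \EE^\mu_{T,U_T} f = \vec 1_T f$. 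Summing over the pairwise disjoint tops $T \in \tau(D)$ then gives the expansion of $f$, which together with Step~2 is precisely the assertion that the three families form a complete set of orthogonal projections. The telescoping identities come from expanding, along a chain $Q = R_0 \subsetneq R_1 \subsetneq \dots \subsetneq R_m = R$ with $U_Q = U_R$, each $\triangle_{R_i}$ via the equivalent form of the Step~1 identity: nesting forces $U_{R_i} = U_Q$ for every $i$, the $\Psi$-terms drop out, and discarding the children of $R_i$ disjoint from $Q$ leaves $\vec 1_Q \triangle_{R_i} = \vec 1_Q\EE^\mu_{R_{i-1},U_Q} - \vec 1_Q\EE^\mu_{R_i,U_Q}$; summing over $i$ collapses the telescope to $\EE^\mu_{Q,U_Q} - \vec 1_Q\EE^\mu_{R,U_R}$. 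Finally, the orthogonality conditions $\int_{\RR^n} \triangle_Q f \cdot p \,\dd{\mu} = 0$ for $p \in U_Q$ are immediate, since $\triangle_Q f \in L^2_{Q,U_Q,U_Q}(\mu)$, which by definition consists of functions supported on $Q$ and orthogonal to every $p \in U_Q$.

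\emph{Main obstacle.} The algebra above is essentially forced by the two direct-sum identities of Section~\ref{sec:preliminaries}, so the real content is analytic. The $L^2$ convergence reduces cleanly to $L^2$-martingale convergence of the dyadic averaging operators together with the displayed domination inequality, and the behaviour at a top is controlled once one knows $\EE^\mu_{Q_j,U_T} \to \EE^\mu_{T,U_T}$; the only subtlety there is functions $p \in U_T$ with $\vec 1_{Q_j}p \in L^2(\mu)$ but $\vec 1_T p \notin L^2(\mu)$, which I would dispose of by working with the subspace such functions actually span in $L^2_T(\mu)$. I expect the $\mu$-almost everywhere convergence to be the genuinely delicate step: the higher-order corrections $\sum_{P \in D_m(R)}(\EE^\mu_{P,U_P} - \EE^\mu_{P,\{1\}})$ do not form a martingale for the dyadic filtration, so upgrading their $L^2$ convergence to pointwise convergence requires a maximal-function estimate, exactly along the lines of the fixed-$U$ argument in \cite{rahm2019weighted}.
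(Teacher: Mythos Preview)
Your proposal is correct and shares the same algebraic core as the paper---both rest on the local identity $\bigoplus_{Q'\in C(Q)}P_{Q',U_{Q'}}(\mu)=L^2_{Q,U_Q,U_Q}(\mu)\oplus P_{Q,U_Q}(\mu)\oplus\bigoplus_{Q'\in C(Q)}\bigl(P_{Q',U_{Q'}}(\mu)\ominus P_{Q',U_Q}(\mu)\bigr)$, which is your Step~1 and the paper's equation~(1) rewritten at the level of children---but the completeness arguments diverge. The paper iterates this identity \emph{upward} through a tower, observes that the resulting direct sum contains every Haar space $L^2_{Q,\{1\},\{1\}}(\mu)$ (since $1\in U_Q$), and then invokes the known fact that the Haar spaces together with the constant projections on tops already decompose $L^2(\mu)$; completeness therefore comes for free by containment of a known complete system. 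You instead iterate \emph{downward} and prove completeness from scratch via the martingale convergence theorem, using the domination $\|\vec 1_P f-\EE^\mu_{P,U_P}f\|\le\|\vec 1_P f-\EE^\mu_{P,\{1\}}f\|$ to reduce to the classical dyadic averaging operators. Your route is more self-contained and makes the analytic content explicit (you correctly flag the $\mu$-a.e.\ convergence as requiring a genuine maximal inequality, a point the paper simply asserts); the paper's route is shorter because it outsources all convergence questions to the Haar case, at the cost of being less transparent about what is actually happening at the tops.
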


Here the $\triangle^\mu_{Q, U_Q, U_Q}$ are the usual Alpert projections, and the $\EE^\mu_{T, U_T}$ are the aforementioned projections on the tops of $D$. The $\EE^\mu_{Q, U_Q} - \EE^\mu_{Q, U_{P(Q)}}$ are the necessary addition which allows the basis to retain completeness while varying $U_Q$, which we prove now. Besides this consideration, we otherwise follow the strategy used by Rahm, Sawyer, and Wick in \cite[Theorem 1]{rahm2019weighted} to show the equavialent result for polynomial Alpert bases.

\begin{proof}
Fix $Q \in D$. By construction we have
\begin{equation}
L^2_{Q, U_Q, U_Q}(\mu) \oplus P_{Q,U_Q}(\mu) = L^2_{Q, U_Q, \varnothing}(\mu).
\end{equation}
Applying this to each child $Q' \in C(Q)$ we have
\[\bigoplus_{Q' \in C(Q)} P_{Q', U_{Q'}}(\mu) \oplus \bigoplus_{Q' \in C(Q)} L^2_{Q', U_{Q'}, U_{Q'}}(\mu) = \bigoplus_{Q' \in C(Q)}L^2_{Q', U_{Q'}, \varnothing}(\mu).\]
The leftmost sum in this expression can be rewritten as
\[\bigoplus_{Q' \in C(Q)} P_{Q', U_{Q'}}(\mu) = L^2_{Q, U_Q, U_Q}(\mu) \oplus P_{Q,U_Q}(\mu) \oplus \bigoplus_{Q' \in C(Q)} \left( P_{Q',U_{Q'}} \ominus P_{Q',U_Q}(\mu) \right).\]
The nesting property $U_Q \subseteq U_{Q'}$ guarantees that this construction is valid.

Next consider the tower $\Gamma(Q)$ and choose some $S \in \Gamma(Q)$. Let $X$ be the set of all cubes $R$ contained in $S$ and with side lengths $l(Q) < l(R) \leq l(S)$. Applying the above argument iteratively to every $P \in X$ yields
\[\bigoplus_{\substack{Q' \subset S \\ l(Q') = l(Q)}} L^2_{Q', U_{Q'}, \varnothing}(\mu) = \bigoplus_{R \in X} L^2_{R, U_R, U_R}(\mu) \oplus P_{S,U_S}(\mu) \oplus \bigoplus_{R \in X} \left( P_{R,U_R} \ominus P_{R,U_P(R)}(\mu) \right).\] 
In particular, this sum contains the Haar space $L^2_{Q, \{1\}, \{1\}}(\mu)$ since $1 \in U_Q$. 

Now take the limit of this construction as $S$ tends to infinity. Let $T$ be the top of $\Gamma(Q)$; in the limit, $P_{S,U_S}(\mu)$ becomes $P_{T,U_T}(\mu)$. Thus for any $Q \in D$ we have
\[L^2_{Q, \{1\}, \{1\}}(\mu) \subseteq \bigoplus_{Q \in D} L^2_{Q, U_Q, U_Q}(\mu) \oplus \bigoplus_{T \in \tau(D)} P_{T,U_T}(\mu) \oplus \bigoplus_{Q \in D} \left( P_{Q,U_Q} \ominus P_{Q,U_P(Q)}(\mu) \right).\] 
We know that the Haar spaces $L^2_{Q, \{1\}, \{1\}}(\mu)$, $Q \in D$, together with projections on the tops $P_{T, \{1\}}(\mu)$, $T \in \tau(D)$, form a direct sum decomposition of $L^2(\mu)$. We also have $P_{T, \{1\}}(\mu) \subseteq P_{T, U_T}(\mu)$ since $1 \in U_Q$ for every $Q \in D$, so we conclude
\[L^2(\mu) = \bigoplus_{Q \in D} L^2_{Q, U_Q, U_Q}(\mu) \oplus \bigoplus_{T \in \tau(D)} P_{T,U_T}(\mu) \oplus \bigoplus_{Q \in D} \left( P_{Q,U_Q} \ominus P_{Q,U_P(Q)}(\mu) \right).\]

The orthogonality conditions are satisfied by construction: the nesting property $U_Q \subseteq U_{Q'}$ ensures that any $\triangle^\mu_{Q, U_Q, U_Q} f$ is orthogonal to every $p \in U_R$ for any $R \in D$ containing $Q$. Lastly the telescoping property follows by chaining together instances of (1), exactly as in the standard Alpert construction.
\end{proof}

This construction partially alleviates one of the drawbacks of Alpert bases, namely that extra orthogonality is achieved only by making the basis much larger than its Haar counterpart. This might be of interest in applications where the additional orthogonality is only needed locally, or only beyond a particular level of resolution. We end with a pair of remarks concerning the structure of the resulting basis. 

\begin{remark}
The telescoping identity in Theorem \ref{thm:variable-alpert-bases} is weaker than in a standard Alpert basis where all $U_Q$ are equal. However since we still have $U_R \subseteq U_Q$ for $Q \subseteq R$, multiple instances of this weaker identity can be chained together to produce an expression that is still ``good". At each level in the chain one subtracts projections only onto the functions which are in $U_{P'}$ but not in $U_P$, for $P' \in C(P)$.
\end{remark}

\begin{remark}
The projections at the tops can be interpreted as a special case of the projections $\EE^\mu_{Q, U_Q} - \EE^\mu_{Q, U_{P(Q)}}$. If we think of a top $T$ as being a kind of dyadic cube and define $U_{P(T)} = \varnothing$, as tops do not have parents, then $\EE^\mu_{Q, U_{P(T)}}$ is trivial and we recover the usual projection on $T$. In this sense our generalization only extends an existing complexity in Alpert bases, rather than introducing a new one.
\end{remark}

\end{document}